\documentclass[12pt]{article}
\usepackage[T2A]{fontenc}
\usepackage[cp1251]{inputenc}
\usepackage[russian,english]{babel}
\usepackage{amsmath,amssymb,amsfonts,amsthm,bbding}

\theoremstyle{plain}
\newtheorem{theorem}{Theorem}

\newtheorem{definition}{Definition}

\newtheorem{lemma}[theorem]{Lemma}

\newtheorem{remark}{Remark}
\sloppy

\let\geq\geqslant
\let\leq\leqslant

\DeclareMathOperator\Sym{Sym}
\DeclareMathOperator\prob{prob}
\DeclareMathOperator\sign{sign}

\begin{document}

\title{Combinatorial and Probabilistic Formulae for Divided Symmetrization}
\author{F. Petrov}
\maketitle

\let\thefootnote\relax\footnote{
St. Petersburg Department of
V.~A.~Steklov Institute of Mathematics of
the Russian Academy of Sciences, St. Petersburg State University.
E-mail: fedyapetrov@gmail.com}

\begin{abstract}
Divided symmetrization of a function $f(x_1,\dots,x_n)$ is symmetrization of the
ratio $$DS_G(f)=\frac{f(x_1,\dots,x_n)}{\prod (x_i-x_j)},$$
where the product is taken over the set of edges of some graph $G$. 
We concentrate on the case when $G$ is a tree and $f$
is a polynomial of degree $n-1$, in this case $DS_G(f)$ is a constant
function. We give a combinatorial interpretation
of the divided symmetrization of monomials for general trees and probabilistic
game interpretation for a tree which is a path. In particular, this implies
a result by Postnikov originally proved by computing volumes
of special polytopes, and suggests its generalization.
\end{abstract}

\section{Introduction}

Let $V$ be a set of variables, $|V|=m$, say, $V=\{x_1,\dots,x_m\}$ (but further, we need and allow  sets such
as $\{x_2,x_3,x_9\}$). It is convenient to think that $V$ is well ordered: $x_1<x_2<\dots<x_m$.
For a rational function $\varphi$, with coefficients in some field,  of variables from $V$, define its symmetrization 
as $$\Sym \varphi=\sum_{\pi} \varphi(\pi_1,\dots,\pi_m),$$
where summation is taken over all $m!$ permutations $\pi$ of the variables.

Let $f$ be polynomial of degree $d$ in the variables from $V$. Then, its divided symmetrization
$$
DS(f):=\Sym \left(\frac{f}{\prod_{x,y\in V,x<y} (x-y)}\right)
$$
is also polynomial of degree not exceeding $d-m(m-1)/2$. In particular, it vanishes identically when
$d<m(m-1)/2$. The reason why $DS(f)$ is a polynomial is the following. Fix variables $x,y$ and 
partition all summands into pairs corresponding to permutations $(\pi,\sigma\pi)$, where $\sigma$ is a
transposition of $x$ and $y$. We see that in the sum of any pair, the multiple $x-y$ in the denominator gets cancelled. Thus every multiple
is cancelled and so we get polynomial. The symmetrization operators have applications, for
instance, in the theory of symmetric functions, see Chapter 7 of the A.~Lascoux's book \cite{La}.

Let $G(V,E)$ be a graph on the set of vertices $V$. We view $E$ as a set of pairs $(x,y)\in V^2$, $x<y$.
We may consider \textit{partial symmetrization in $G$},
that is,
$$
DS_G(f)=\Sym \left(\frac{f}{\prod_{(x,y) \in E} (x-y)}\right).
$$
Of course this is a polynomial again of degree at most $d-|E|$ due to the obvious formula
$$
DS_G(f)=DS\left(f\cdot \prod_{x<y,(x,y)\notin E} (x-y)\right).
$$

If we restrict $DS_G$ to polynomials of degree at most $|E|$, we get a linear functional. The kernel $K_G$ of this functional
is particularly structured. First of all, all polynomials of degree less then $d$
lie in $K_G$. Next, if $f$ has a symmetric factor, i.e., $f=gh$, where $g$ is symmetric and non-constant,
then $f\in K_G$. This is true because of the formula $DS_G(gh)=gDS_G(h)$, and the second multiple being equal to 0 since 
$\deg h<|E|$. 

Assume that $G$ is disconnected. That is, $V=U\sqcup W$, and there are no edges 
of $G$ between $U$ and $W$: $E=EU\sqcup EW$, where $EU,EW$ are sets of edges joining
vertices of $U,W$ respectively. Denote the corresponding subgraphs of $G$ by $GU=(U,EU)$ and $GW=(W,EW)$.
Note that both $U,W$ are well ordered sets of variables and thus the above definitions
still apply to the subgraphs $GU, GW$.

Any polynomial $f$ may be represented as a sum $\sum u_iw_i$, where the polynomials
$u_i$ depend only on variables from $U$, while $w_i$ depends only on variables from $W$ (and, of course,
the degree $\deg u_i+\deg w_i$ of each summand does not exceed $\deg f$). Assume that $\deg f\leq |E|$.
Then
\begin{equation}\label{eq1}
DS_G(f)=\binom{m}{|U|}\sum_i DS_{GU}(u_i)\cdot DS_{GW}(w_i)
\end{equation}
(the binomial factor comes from fixing the sets of variables $\pi(U)$ and $\pi(V)$. If 
$\deg u_i<|EU|$ then the symmetrization $DS_{GU}(u_i)$ is just 0, analogously if $\deg w_i< |EW|$. If 
$\deg u_i=|EU|$, $\deg w_i=|EW|$, then both $DS_{GU}(u_i)$, $DS_{GW}(w_i)$ are constants and
therefore do not depend on the sets of variables $\pi(U)$, $\pi(V)$).
It follows that $f\in K_G$ if for any $i$ either $u_i\in K_{GU}$ or $w_i\in K_{GW}$.
As already noted above, it is so unless $\deg u_i=|EU|$, $\deg w_i=|EW|$. 
If $f$ has a factor symmetric in the variables from $U$, then $DS_{GU}(u_i)=0$. 

Next observation. If $E'\subset E$ and $f=h\cdot \prod_{(x,y)\in E'} (x-y)$ then $DS_G(f)=DS_{G\setminus E'}(h)$.
Combining this with our previous argument, we get the following 

\begin{lemma}\label{lem1} If $E'\subset E$ and $U\subset V$ is a connected component in $G\setminus E'$, $f$ is divisible by
$h\prod_{(x,y)\in E'} (x-y)$, where $h$ is symmetric in variables from $U$, then $f\in K_G$.
\end{lemma}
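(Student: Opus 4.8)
The plan is to deduce the lemma by combining the ``observation'' recorded just before it with the analysis around formula~\eqref{eq1}; no new idea is needed, only bookkeeping of degrees. I read $h$ as a non-constant polynomial in the variables of $U$, symmetric in those variables, and $f$ as having degree at most $|E|$ (implicit in $f\in K_G$). Write the divisibility hypothesis as $f=g\cdot h\cdot\prod_{(x,y)\in E'}(x-y)$ for a polynomial $g$; the case $f=0$ is trivial. Applying the observation with $gh$ playing the role of ``$h$'' there gives $DS_G(f)=DS_{G\setminus E'}(gh)$, so it suffices to show $gh\in K_{G\setminus E'}$. Here $\deg(gh)=\deg f-|E'|\le|E|-|E'|$, which is exactly the number of edges of $G\setminus E'$, so $gh$ does lie in the domain of $DS_{G\setminus E'}$.

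Next I would use that $U$ is a connected component of $G\setminus E'$. Put $W=V\setminus U$; no edge of $G\setminus E'$ joins $U$ to $W$, so its edge set is a disjoint union $E_U\sqcup E_W$ with $|E_U|+|E_W|=|E|-|E'|$, and $G\setminus E'=G_U\sqcup G_W$ in the notation preceding~\eqref{eq1}. (If $W=\emptyset$, i.e.\ $U=V$, then $h$ is symmetric in all variables and one argues directly: $DS_G(f)=DS_{G\setminus E'}(hg)=h\cdot DS_{G\setminus E'}(g)$ with $\deg g\le(|E|-|E'|)-1$, which vanishes.) Decompose $g=\sum_i c_iw_i$ with $c_i$ in the variables of $U$, $w_i$ in those of $W$, and $\deg c_i+\deg w_i\le\deg g$ for each $i$; then $gh=\sum_i(hc_i)w_i$, so the $U$-parts of $gh$ are $u_i:=hc_i$, each divisible by $h$. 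Since $\deg(gh)\le|E_U|+|E_W|$, formula~\eqref{eq1} applies:
\[
DS_{G\setminus E'}(gh)=\binom{m}{|U|}\sum_i DS_{G_U}(hc_i)\cdot DS_{G_W}(w_i).
\]

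Now the vanishing follows from a degree count on the $U$-parts. A summand can be nonzero only if $\deg u_i=|E_U|$: if $\deg u_i<|E_U|$ the factor $DS_{G_U}(u_i)$ vanishes, while if $\deg u_i>|E_U|$ then $\deg w_i<|E_W|$ and $DS_{G_W}(w_i)$ vanishes. For such a summand, since $h$ is symmetric in the $U$-variables we may pull it out, $DS_{G_U}(hc_i)=h\cdot DS_{G_U}(c_i)$, and $\deg c_i=|E_U|-\deg h<|E_U|$ because $h$ is non-constant, so $DS_{G_U}(c_i)=0$ (divided symmetrization on a graph with $|E_U|$ edges annihilates polynomials of strictly smaller degree). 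Hence every summand is $0$, so $DS_{G\setminus E'}(gh)=0$ and $f\in K_G$.

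The only genuine point of care, I expect, is the degree bookkeeping: checking the inequality $\deg(gh)\le|E_U|+|E_W|$ that licenses~\eqref{eq1}, and the \emph{strict} drop $\deg c_i<|E_U|$ on surviving terms, which is exactly where non-constancy of $h$ is used. Everything else assembles three facts that precede the lemma: the observation, the disconnected-graph formula~\eqref{eq1}, and the identity $DS_{G_U}(h\varphi)=h\,DS_{G_U}(\varphi)$ valid when $h$ is symmetric in the variables of $U$.
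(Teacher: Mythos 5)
Your proof is correct and is exactly the argument the paper intends: the lemma is stated there as an immediate consequence of the observation $DS_G\bigl(h\prod_{(x,y)\in E'}(x-y)\bigr)=DS_{G\setminus E'}(h)$ combined with the discussion around \eqref{eq1}, and your write-up supplies precisely that combination, with the degree bookkeeping (and the role of non-constancy of the symmetric factor) made explicit. Your reading of $h$ as a non-constant symmetric polynomial in the $U$-variables matches the paper's intent.
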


Denoting by $I_G$ the set of polynomials $v$ such that $vh\in K_G$ provided that $\deg vh\leq |E|$ (it is sort of an ideal, but the
set of polynomials with restricted degree is not a ring), we have found some elements in $I_G$: all symmetric polynomials and all polynomials like those in Lemma \ref{lem1}.

Next, we consider the case of partial divided symmetrization w.r.t. tree $G$ on $n$ vertices of a polynomial $f$,
$\deg f=n-1$. This is a linear functional and we give combinatorial formulae for its values in a natural monomial base.

\section{Tree}

\begin{definition}
Let $T=(V, E)$ be a tree on a well ordered set $V$, $|V|=n$. Let
$C:=\prod_{x\in V} x^{w(x)+1}$ be a monomial of degree $n-1$,
where we call $w(x)\in \{-1,0,1,2,\dots\}$ \emph{a weight} of a vertex
$x$. The total weight of all vertices equals $-1$.
For each edge $e=(x,y)\in E$, $x<y$, 
consider two connected components of the 
graph $T\setminus e$. The total weight is negative for exactly one of them. 
If this component contains $y$, call edge $e$ \emph{regular}, else
call it \emph{inversive}. Define sign 
$\sign(C)$ as $(-1)^{\{\text{number of inversive edges}\}}$.
Call a permutation $\pi$ of the set $V$ to be \emph{$C$-acceptable} if for all edges
$e=(x,y)$, $\pi(x)<\pi(y)$
if and only if $e$ is regular. 
\end{definition}

\begin{theorem}
The partial divided symmetrization $DS_T(C)$ of the monomial $C$
equals the number of $C$-acceptable permutations times $\sign(C)$.
\end{theorem}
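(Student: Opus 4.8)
I would argue by induction on $n=|V|$; for $n=1$ the statement is vacuous ($C=1$, no edges, the identity permutation is the unique $C$-acceptable one, $\sign(C)=1$, and $DS_T(1)=1$). For the inductive step fix a leaf $\ell$ with neighbour $v$, let $e=\ell v$, and write $C=x_\ell^{\,a}x_v^{\,c}M$, where $M$ is a monomial in the remaining $n-2$ variables and $a\ge 0$ is the exponent of $x_\ell$. Everything is to be reduced, by partial fractions in $x_\ell$ and formula (\ref{eq1}), to divided symmetrizations over the $(n-1)$-vertex tree $T\setminus\ell$ (handled by the induction hypothesis) together with divided symmetrizations over $T$ of "simpler" monomials.

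First I would treat the case $a\ge 1$. From $x_\ell^{\,a}-x_v^{\,a}=(x_\ell-x_v)\sum_{j=0}^{a-1}x_\ell^{\,j}x_v^{\,a-1-j}$ one gets, for $\ell<v$ (with the obvious sign change if $v<\ell$),
$$\frac{C}{x_\ell-x_v}=\frac{x_v^{\,a+c}M}{x_\ell-x_v}+\sum_{j=0}^{a-1}x_\ell^{\,j}x_v^{\,a+c-1-j}M .$$
Applying $\Sym$ and using the identity $DS_G\bigl(g\prod_{E'}(x-y)\bigr)=DS_{G\setminus E'}(g)$ stated just before Lemma~\ref{lem1}, the first summand gives $DS_T(C_0)$, where $C_0=x_v^{\,a+c}M$ is $C$ with all weight of $\ell$ transferred to $v$; the remaining summands give $DS_{T\setminus e}$ of monomials of degree $n-2$. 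Since $T\setminus e$ is disconnected with $\{\ell\}$ as a component, formula (\ref{eq1}) applies to each, and the degree bookkeeping there (a term survives only if the degree of its $\{\ell\}$-part equals $0=|E_{\{\ell\}}|$ and the degree of its $(V\setminus\ell)$-part equals $n-2=|E_{V\setminus\ell}|$) kills everything but the $j=0$ term, leaving
$$DS_T(C)=DS_T(C_0)\;\pm\;n\,DS_{T\setminus\ell}\!\left(x_v^{\,a+c-1}M\right),$$
and $x_v^{\,a+c-1}M$ is a degree-$(n-2)$ monomial on $T\setminus\ell$, covered by induction. Iterating over all leaves carrying positive weight reduces $DS_T(C)$ to a combination of induction-controlled terms plus the single quantity $DS_T(\widehat C)$, where $\widehat C$ is $C$ with all leaf-weight pushed inward, so that \emph{every} leaf of $T$ has exponent $0$ in $\widehat C$.

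The residual case $DS_T(\widehat C)$ is the crux. Here I would integrate out a leaf $\ell$ (neighbour $v$) directly: grouping the $n!$ permutations by their restriction to $V\setminus\{\ell,v\}$, the two ways to place $v$ and the one value used nowhere else combine, after cancellation, into a divided difference of the rational function $t\mapsto t^{\,a_v}\big/\prod_{v'\sim v,\ v'\neq\ell}(t-\gamma(v'))^{\pm1}$ (where $\gamma$ records the values already assigned), and the resulting sum over $\gamma$ then collapses by repeated use of the classical identity $\sum_{k}t_k^{\,d}/\prod_{i\neq k}(t_k-t_i)=h_{d-m+1}(t_1,\dots,t_m)$. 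This identifies $DS_T(\widehat C)$ with an explicit signed integer multiple of a divided symmetrization over $T\setminus\ell$ of a monomial in which the weight formerly at $v$ has been pushed one step further along the branch; that quantity is again given by the induction hypothesis (after, if the neighbour $v$ is itself weightless, first running the same manoeuvre inside the branch until a vertex carrying weight is reached).

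Finally I would match the output of these recursions with the combinatorial formula. The key remark is that \emph{$C$-acceptable} says precisely that the linear order $x\prec y\iff\pi(x)<\pi(y)$ is a linear extension of the partial order $P_C$ obtained by orienting every edge of $T$ towards its negative-weight side; thus the number of $C$-acceptable permutations equals the number $e(P_C)$ of linear extensions of $P_C$, while $\sign(C)=(-1)^{\#\{\text{edges oriented from the larger to the smaller vertex}\}}$. It then remains to check that $\sign(C)\,e(P_C)$ satisfies the very recursions above. This is completely explicit: transferring one unit of weight from $\ell$ to $v$, deleting $\ell$, or lowering the weight of $v$ changes only the orientation of $e$ or of edges adjacent to $v$, hence changes $P_C$ and the inversive-edge count in a controlled way (for instance, in the displayed recursion $e$ flips between regular and inversive and $P_{C_0},P_C$ restrict to the same poset $Q$ on $V\setminus\ell$, so $e(P_C)+e(P_{C_0})=n\,e(Q)$ by inserting $\ell$ as a new extreme element, and $e(Q)=e(P_{x_v^{\,a+c-1}M})$), and the signs line up. I expect the genuine difficulty to lie exactly in the residual case $\widehat C$: keeping the divided-difference manipulation and the ensuing symmetric-function collapse honest, and matching them combinatorially, especially in the sub-case where the chosen leaf's neighbour is also weightless and weight must first be transported to it.
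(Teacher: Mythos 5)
Your leaf-transfer recursion is sound and is in fact a special case of the paper's key identity: for \emph{any} edge $e=(x,y)$ and any degree-$(n-2)$ monomial $C_1$ one has $DS_T(C_1x)-DS_T(C_1y)=DS_{T\setminus e}(C_1)$, which formula \eqref{eq1} and the induction hypothesis evaluate; your displayed recursion is this identity for a leaf edge (where one component is a single vertex, whence the factor $n=\binom{n}{1}$), and your linear-extension bookkeeping $e(P_C)+e(P_{C_0})=n\,e(Q)$ is exactly the paper's combinatorial match for that identity. The problem is that leaf edges alone do not suffice, and you have correctly located where your argument breaks: the residual monomials $\widehat C$ with all weight on internal vertices form a large family (for a path on $n+1$ vertices, every degree-$n$ monomial in the $n-1$ internal variables), and your proposed direct evaluation by divided differences and the identity $\sum_k t_k^d/\prod_{i\ne k}(t_k-t_i)=h_{d-m+1}$ is only a gesture --- no cancellation is actually carried out, the claim that everything "collapses" to a single $DS_{T\setminus\ell}$ term with weight pushed one step along the branch is unsubstantiated (and unclear even to formulate when $v$ has several branches), and the corresponding combinatorial match is not addressed. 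As written, the proof is incomplete precisely there.

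The paper closes this gap without ever computing a residual value. First, it uses the difference relation along \emph{all} edges, not just leaf edges; since $T$ is connected, these moves link every degree-$(n-1)$ monomial to every other, so the relations show that $\tau(C)-DS_T(C)$ is a \emph{constant} independent of $C$. Second --- and this is the idea you are missing entirely --- the constant is killed by one extra equation: $\sum_{x\in V}x^{n-1}$ is symmetric, hence $\sum_x DS_T(x^{n-1})=0$, while $\sum_x\tau(x^{n-1})=0$ is proved by a sign-reversing involution on permutations pointed by a vertex (swap the two largest values and move the marked point). Even if you extended your recursion to all edges, you would still be one normalization short, because difference relations determine a function only up to an additive constant; some substitute for this symmetric-polynomial normalization and its combinatorial counterpart is indispensable. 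I would recommend replacing the residual-case computation by this normalization argument rather than trying to make the divided-difference collapse honest.
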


\begin{proof}
Induction on $n$. The base case $n=1$ is obvious. Assume that $n>1$ and the assertion is valid for $n-1$.
For any monomial $C$ denote by $\tau(C)$
the number of $C$-acceptable permutations times $\sign(C)$. We need to check that
$\tau(C)=DS_T(C)$ for all $C$. To this end,  it suffices to verify the following
properties of $\tau$ and $DS_T$:

(i) $\tau(C)-DS_T(C)$ does not depend on $C$;

(ii) $\sum_{x\in V} \tau(x^{n-1})=0=\sum DS_T(x^{n-1})$.

We start with (i). In turn, it suffices to prove that $\tau(C_1x)-DS_T(C_1x)=
\tau(C_1y)-DS_T(C_1y)$, where $C_1$ is a monomial of degree $n-2$ and
$e=(x,y)\in E$, $x<y$, is an edge of $T$. We have 
$$
DS_T(C_1x)-DS_T(C_1y)=DS_T(C_1(x-y))=DS_{T\setminus e} (C_1).
$$
Denote $V=V_x\sqcup V_y$, where $V_x$, $V_y$ are components
of $T\setminus e$ containing $x$, $y$ respectively; $T_x$, $T_y$ 
are trees induced by $T$ on $V_x$, $V_y$, 
and $C_1=C_x\cdot C_y$,
where $C_x$ is a monomial in elements the of $V_x$ and $C_y$ in the elements of $V_y$. 
We have $$
DS_{T\setminus e}(C_x\cdot C_y)=
 \begin{cases} \binom{n}{|V_x|} DS_{T_x}(C_x)\cdot DS_{T_y}(C_y)&\mbox{if } \deg C_x=|V_x|-1\\ 
0 & \mbox{otherwise }. 
\end{cases} 
$$
In the second case we also have $\tau(C_1x)=\tau(C_2x)$, since any edge $\tilde{e}$
is either regular for both monomials $C_1x$, $C_1y$, or inversive for both
monomials. In the first case edge $e$ is regular for $C_1x$ and inversive for $C_1y$. 
It means that $C_1x$ and $C_1y$ have different signs: $\sign(C_1x)=\sign(C_x)\cdot \sign(C_y)$.
It follows that $\tau(C_1x)-\tau(C_1y)$ equals $\sign(C_x)\cdot \sign(C_y)$
times number of permutations $\pi$
which are either $C_1x$-acceptable or $C_1y$-acceptable. 
If we fix $\pi(V_x)$ (and thus automatically $\pi(V_y)$), 
which may be done in exactly $\binom{n}{|V_x|}$ ways,
this property of a permutation is a combination of independent properties on
$T_x$ and on $T_y$. Applying the induction hypothesis we get that 
$\tau(C_1x)-\tau(C_1y)$ equals $\binom{n}{|V_x|} DS_{T_x}(C_x)\cdot DS_{T_y}(C_y)$,
as desired. 

Now we come to (ii). We have $\sum DS_T(x^{n-1})=DS_T(\sum x^{n-1})=0$,
since $\sum x^{n-1}$ is a symmetric polynomial. So, it suffices to prove that 
$\sum \tau(x^{n-1})=0$. This, however, is an alternating sum of numbers of pointed permutations:
$\tau(x^{n-1})$ counts permutations $\pi$ pointed by $x$ satisfying 
$\pi(u)>\pi(v)$ for any edge $uv$ so that there exists a path from $x$ to $u$ avoiding $v$.
We prove that it does vanish by providing a sign-changing 
involution on this set of pointed permutations (sign is understood in the sense of $\tau$).
Let $v_1>v_2$ be two maximal elements of $V$.
The involution 
acts as follows: take permutation $\pi$ pointed by $x$, then $\pi(x)=v_1$. Let
$y=\pi^{-1}(v_2)$, then $x$ and $y$ are neighbours. 
Replace $\pi(x)$ by $v_2$ and $\pi(y)$ by $v_1$, also point a
new permutation by $y$.  
\end{proof}

\section{Path}

Now we restrict our attention to the following specific situation. Let $m=n+1$, $V=\{x_0,\dots,x_{n}\}$,
$E(G)=\{(x_i,x_{i+1}),0 \leq i \leq n-1\}$. For notational simplicity, denote $\Phi(f)=DS_G(f)$, $K=K_G$, $I=I_G$. 
Also denote $y_i=x_0+x_1+\dots+x_i$. We have $y_n\in I$ and $y_k(x_{k+1}-x_k)\in I$ for $k=0,1,\dots,n-1$.

Let's show how this helps to calculate, say, $\Phi(x_i^n)$. 

\begin{lemma}\label{lem2}
$\Phi(x_i^n)=(-1)^i \binom{n}{i}.$
\end{lemma}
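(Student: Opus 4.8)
The plan is to derive the formula directly from the Theorem of Section~2, using the path $G$ in the role of the tree $T$: the path on $V=\{x_0,\dots,x_n\}$ is a tree on $n+1$ vertices, and $x_i^n$ is a monomial of degree $n=(n+1)-1$, so the Theorem applies and writes $\Phi(x_i^n)=DS_G(x_i^n)$ as $\sign(x_i^n)$ times the number of $x_i^n$-acceptable permutations. The whole argument then reduces to three bookkeeping steps: reading off the weights, classifying the edges as regular or inversive, and counting the acceptable permutations.

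First I read off the weights: writing $x_i^n=\prod_j x_j^{w(x_j)+1}$ forces $w(x_i)=n-1$ and $w(x_j)=-1$ for $j\ne i$, and these sum to $(n-1)+n\cdot(-1)=-1$, as the definition demands. Then I classify the edge $e=(x_j,x_{j+1})$: deleting it splits $G$ into the parts $\{x_0,\dots,x_j\}$ and $\{x_{j+1},\dots,x_n\}$. If $j\ge i$, the first part has total weight $(n-1)+j\cdot(-1)=n-1-j\ge0$, so the part of negative total weight is $\{x_{j+1},\dots,x_n\}$, which contains the larger endpoint $x_{j+1}$; hence $e$ is regular. If $j<i$, the first part is $j+1$ vertices each of weight $-1$, so it has negative total weight $-(j+1)$ and contains the smaller endpoint $x_j$; hence $e$ is inversive. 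Consequently the inversive edges are exactly $(x_0,x_1),\dots,(x_{i-1},x_i)$, there are $i$ of them, and $\sign(x_i^n)=(-1)^i$.

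Finally I count: by the classification, $\pi$ is $x_i^n$-acceptable iff $\pi(x_j)>\pi(x_{j+1})$ for $j<i$ and $\pi(x_j)<\pi(x_{j+1})$ for $j\ge i$, i.e.\ $\pi(x_0)>\pi(x_1)>\dots>\pi(x_i)<\pi(x_{i+1})<\dots<\pi(x_n)$. Thus $\pi(x_i)$ must be the globally smallest value, and the remaining $\pi$ is pinned down once we choose which $i$ of the other $n$ values occupy the strictly decreasing branch $x_0,\dots,x_{i-1}$; there are $\binom{n}{i}$ such choices. Multiplying by the sign gives $\Phi(x_i^n)=(-1)^i\binom{n}{i}$. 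I do not expect a real obstacle here: the only subtlety is the borderline edge $e=(x_i,x_{i+1})$, where the first part has total weight exactly $n-1-i\ge0$, so the negative-weight part is still the right one and $e$ is (correctly) regular --- the count is unaffected. One could instead try to argue only from the memberships $y_n\in I$ and $y_k(x_{k+1}-x_k)\in I$ noted just before the statement, reducing $x_i^n$ modulo $K$; but fixing the constant $(-1)^i\binom{n}{i}$ that way still needs one normalizing value, e.g.\ $\Phi\big(\prod_{j=0}^{n-1}(x_j-x_{j+1})\big)=\Sym(1)=(n+1)!$, so leading with the tree formula is the cleaner route.
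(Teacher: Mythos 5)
Your proof is correct, but it is not the route the paper takes: the paper proves this lemma by a self-contained induction on $n$, setting $c_i=\Phi(x_i^n)$ and pinning the $c_i$ down by the $n+1$ linear relations $c_0+\dots+c_n=0$ (symmetric polynomials lie in $K$) and $c_k-c_{k+1}=DS_{G\setminus e}(x_k^kx_{k+1}^{n-k})=(-1)^k\binom{n+1}{k}$, the latter coming from the disconnection formula \eqref{eq1} and the induction hypothesis. You instead specialize the general tree theorem of Section~2 to the path, which is legitimate since that theorem is proved independently of this lemma (no circularity). Your bookkeeping is accurate: the weight of $x_i$ is $n-1$ and all others $-1$; the component $\{x_0,\dots,x_j\}$ has total weight $-(j+1)<0$ for $j<i$ and $n-1-j\ge 0$ for $j\ge i$, so exactly the $i$ edges $(x_0,x_1),\dots,(x_{i-1},x_i)$ are inversive, giving $\sign(x_i^n)=(-1)^i$; and the acceptable permutations are exactly those with $\pi(x_0)>\dots>\pi(x_i)<\dots<\pi(x_n)$, of which there are $\binom{n}{i}$ since $\pi(x_i)$ is forced to be the minimum and the decreasing branch is determined by its value set. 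What your approach buys is brevity and an explicit combinatorial meaning for $(-1)^i\binom{n}{i}$ as a signed count of valley-shaped permutations; what the paper's approach buys is independence from the heavier Section~2 theorem and a template (uniqueness from linear relations) that recurs in the maximum-principle arguments of Theorems \ref{Postnikov} and \ref{sandpile}. Your closing remark about the alternative reduction modulo $K$ is also sound but, as you note, would require a separate normalization.
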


\begin{proof} Induction on $n$. The base case $n=1$ (or even $n=0$) is straightforward. 
Denote $c_i=\Phi(x_i^n)$. We have $c_0+\dots+c_n=\Phi(x_0^n+\dots+x_n^n)=0$, since symmetric polynomials
lie in $K$. Next, denoting $e=(x_k,x_{k+1})\in E$ we get
\begin{multline*}
c_k-c_{k+1}=\Phi(x_k^n-x_{k+1}^n)=DS_{G\setminus e} (x_k^{n-1}+\dots+x_{k+1}^{n-1})=\\
=DS_{G\setminus e} (x_k^kx_{k+1}^{n-k})=\binom{n+1}k (-1)^k
\end{multline*}
by \eqref{eq1} and by the induction assumption. We obtain $n+1$ linear relations on $c_0,\dots,c_n$ which determine 
them uniquely and also the numbers $c_i=(-1)^i\binom{n}{i}$ satisfy those relations. This finishes the induction step. 
\end{proof}

This lemma and many other nice formulae for $\Phi$ are obtained 
recently by T.~Amdeberhan in \cite{T}.

Now, consider the product 
$$
f(x_0,\dots,x_n)=y_0y_1\dots y_{n-1}=x_0(x_0+x_1)(x_0+x_1+x_2)\dots (x_0+\dots+x_{n-1})
$$
and proceed as follows. At first, replace $x_{n-1}$ by $x_{n-2}$ in this product. Next, replace 
$x_{n-2}$ by $x_{n-3}$, and so on. Finally we get $n! x_0^n$. The value of $\Phi$ does not change
after such replacements. Indeed, when we replace $x_{k+1}$ by $x_k$, we add (to our polynomial) a quantity divisible
by $(x_{k+1}-x_k)(x_0+\dots+x_k)$. This is an element of $I$, hence what we add lies in $K$.
Thus
\begin{equation}\label{eq2}
\Phi(y_0y_1\dots y_{n-1})=n!
\end{equation}
\begin{theorem}\label{Postnikov}
For any homogeneous polynomial $h(t_0,\dots,t_n)$ of degree $n$, we have
\begin{multline}\label{eq3}
\Phi\left(h(y_0,y_1,\dots,y_n)+h(y_1,y_2,\dots,y_n,y_0)+\dots+h(y_n,y_0,\dots,y_{n-1})\right)=\\=n! h(1,1,\dots,1).
\end{multline}
\end{theorem}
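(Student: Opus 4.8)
The plan is to reduce the general statement to the special case already established in \eqref{eq2}. The key observation is that $\Phi$ is a linear functional on homogeneous polynomials of degree $n$ in $x_0,\dots,x_n$, and the left-hand side of \eqref{eq3} is linear in $h$; since $h(1,\dots,1)$ is also linear in $h$, it suffices to check the identity on a spanning set of the space of homogeneous degree-$n$ polynomials in $t_0,\dots,t_n$. A convenient spanning set is the set of $n$-th powers of linear forms $(a_0t_0+\dots+a_nt_n)^n$ (these span the whole symmetric power over an infinite field); alternatively one can use products of $n$ linear forms. For such an $h$, evaluating at the cyclic shifts $(y_j,y_{j+1},\dots,y_{j-1})$ and summing, I need to show the symmetrized sum equals $n!$ times the value at all-ones.

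First I would set $h(t_0,\dots,t_n)=\ell_0\ell_1\cdots\ell_{n-1}$ where each $\ell_r$ is a linear form in the $t$'s; actually it is cleanest to take $h$ itself of the form $\prod_{r=0}^{n-1}(t_{i_r}-t_{j_r})$-type factors won't span, so instead I take $h=\prod_{r=1}^{n} L_r(t)$ with $L_r$ arbitrary linear. The crucial point is that when we substitute $t_k\mapsto y_k=x_0+\dots+x_k$, each linear form $L_r(y_0,\dots,y_n)$ becomes a linear form in $x_0,\dots,x_n$, and more importantly the shift $h(y_j,\dots,y_{j+n})$ (indices mod $n+1$) is again a product of $n$ linear forms in the $x$'s. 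The engine of the argument is the membership $y_n\in I$ and $y_k(x_{k+1}-x_k)\in I$ established just before Lemma \ref{lem2}: working modulo $K$ one is allowed to replace $y_n$ by $0$ and, after multiplying by something, to collapse $y_{k+1}$ to $y_k$. So the strategy is: for each cyclic shift, use these relations to transform the product $h(y_{j},\dots)$ modulo $K$ into a scalar multiple of $y_0y_1\cdots y_{n-1}$, then apply \eqref{eq2}.

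Concretely, here is the mechanism I expect to use. Write the telescoping substitutions from the paragraph preceding \eqref{eq2} as a black box: for any polynomial $g$ divisible by $y_0 y_1\cdots y_{n-1}$ after the successive collapses $x_n\to x_{n-1}\to\cdots\to x_1$, one has $\Phi(g)=$ (that scalar)$\cdot n!$. The identity $y_0y_1\cdots y_{n-1}$ corresponds to $h(t)=t_0t_1\cdots t_{n-1}$, i.e. the monomial missing $t_n$; its cyclic shifts are the $n+1$ monomials $t_0\cdots \widehat{t_j}\cdots t_n$ obtained by deleting one variable. For $j<n$ the shift $h(y_j,\dots,y_{j-1})$ contains the factor $y_n$ (since $y_n$ appears among $y_j,\dots,y_{j+n-1}$ when $j\ge 1$), hence lies in $K$ and contributes $0$; only $j=0$ and possibly one boundary term survive, and that surviving term is exactly $y_0\cdots y_{n-1}$, giving $n!=n!\cdot h(1,\dots,1)$ since $h(1,\dots,1)=1$. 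Thus the identity holds for this one $h$, and by cyclic symmetry of the left side it holds whenever $h$ is any monomial in which at least one variable is absent. The remaining task is to handle $h$ that involve all $n+1$ variables with a factor to spare — but since $h$ has degree $n$ and there are $n+1$ variables, \emph{every} monomial of degree $n$ omits at least one variable. Hence the identity holds on all monomials, and by linearity on all $h$.

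The main obstacle I anticipate is making the ``contributes $0$'' step fully rigorous: I claimed $y_n\mid h(y_j,\dots)$ forces the term into $K$, but $K$ is only the kernel of $\Phi$ on polynomials of degree $\le n$, and $y_n\in I$ means $y_n\cdot(\text{anything of complementary degree})\in K$; here $h$ has degree exactly $n$ so the product $y_n\cdot(\text{degree }n-1)$ has degree $n=|E|$, which is exactly the allowed threshold, so the membership does apply — but one must check the degree bookkeeping carefully for each shift, and in particular confirm that no shift produces a term of degree exceeding $n$. A secondary subtlety is the edge case of which single boundary shift survives and verifying its coefficient is $1$, not some other scalar; this is just tracking the telescoping substitution from \eqref{eq2} and should come out cleanly. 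Finally, one should note this recovers Postnikov's result: taking $h$ a monomial $t_0^{c_0}\cdots t_n^{c_n}$ and using the combinatorial reading of $\Phi$ via Lemma \ref{lem2}-type formulas identifies the left side with a sum related to volumes of the relevant polytopes, and the right side $n!\cdot 1 = n!$ is the normalization, so \eqref{eq3} is the promised generalization.
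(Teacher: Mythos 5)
Your reduction to monomials and your verification for the squarefree monomial $h=t_0t_1\cdots t_{n-1}$ are both fine: only the $j=0$ shift survives (each other shift contains the factor $y_n\in I$, and the degree bookkeeping is harmless because every term has degree exactly $n=|E|$), and that surviving shift is $y_0\cdots y_{n-1}$, whose $\Phi$-value is $n!$ by \eqref{eq2}. But the final step is a genuine error. Cyclic symmetry of the left-hand side only transports the identity along the cyclic orbit of a monomial, and the cyclic orbit of $t_0\cdots t_{n-1}$ consists precisely of the $n+1$ squarefree monomials $t_0\cdots\widehat{t_j}\cdots t_n$. A monomial that merely ``omits at least one variable'' --- which, as you correctly note, is every degree-$n$ monomial in $n+1$ variables --- need not be squarefree: $t_0^n$ or $t_0^2t_1\cdots t_{n-2}$ omit variables yet are not cyclic shifts of $t_0\cdots t_{n-1}$. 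So your argument covers only an $(n+1)$-dimensional slice of the $\binom{2n}{n}$-dimensional space of degree-$n$ forms, and the concluding sentence ``hence the identity holds on all monomials'' does not follow.

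What is missing is precisely the mechanism for reducing a general exponent vector $(c_0,\dots,c_n)$ to the squarefree one. The paper's proof supplies it: from $y_k(x_k-x_{k+1})=y_k(2y_k-y_{k-1}-y_{k+1})\in I$ (indices cyclic, using $y_n\in I$ to close the cycle), the difference $Q(c_0,\dots,c_n)$ of the two sides of \eqref{eq3} satisfies the three-term relation $2Q(\dots,c_k,\dots)=Q(\dots,c_{k-1}+1,c_k-1,\dots)+Q(\dots,c_k-1,c_{k+1}+1,\dots)$ whenever $c_k\ge 2$, and a maximum-principle argument then propagates $Q=0$ from the cyclic type $(1,\dots,1,0)$ to all types. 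You gesture at ``using these relations to collapse $h(y_j,\dots)$ modulo $K$ to a multiple of $y_0\cdots y_{n-1}$'' in your strategy paragraph, but this is never carried out; note also that the relevant relation pushes weight to \emph{both} neighbours simultaneously (it is a three-term identity, not a one-directional substitution like the telescoping used for \eqref{eq2}), so a naive collapse of a single term will not work --- one genuinely needs the recursion for the full cyclic sum together with an extremal argument, or some equivalent, to close the proof.
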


Taking $h(t_0,\dots,t_n)=(z_0t_0+\dots+z_nt_n)^n$ recovers Corollary 6.5 of \cite{P}.

\begin{proof}
It suffices to prove Theorem \ref{Postnikov} for monomials $h(t_0,\dots,t_n)=\prod t_i^{c_i}$, $\sum c_i=n$. 
We are interested only in cyclic type of $(c_0,\dots,c_{n})$, and for cyclic type $(1,1,\dots,1,0)$ this follows
from \eqref{eq2} and the fact that $y_n\in I$. Denote by $Q(c_0,\dots,c_n)$ the function of a cyclic vector 
$(c_0,\dots,c_{n})$ which is defined as the difference between two parts of \eqref{eq3}. It helps to remember
that $I$ contains $y_n$ and $y_k(x_k-x_{k+1})=y_k(2y_{k}-y_{k-1}-y_{k+1})$ (we may think that indices are taken modulo 
$n$, then it is true for all $k$). Thus we have $Q(1,1,\dots,1,0)=0$ and $2Q(c_0,c_1,\dots)-Q(c_0+1,c_1-1,c_2,\dots)-Q(c_0,c_1-1,c_2+1,\dots)=0$
if $c_1\geq 2$. These relations imply that $Q$ is always 0 by a standard maximum principle
argument. Indeed, if, say, $Q$ attains positive values and 
$Q(c_0,c_1,\dots,c_n)$ is maximal, and $c_1\geq 2$, then we see that $(c_0+1,c_1-1,c_2,\dots)$ and $(c_0,c_1-1,c_2+1,\dots)$
are other maximizers. It is easy to see that by such operations we may come to the cyclic type
$(1,1,\dots,1,0)$ and so get a contradiction.
\end{proof}

The above proof is indirect in the sense that it does not say explicitly what $\Phi(h)$ is.
This is partially fixed in the following description in terms of a probabilistic sandpile-type process.

Consider $n$ coins distributed somehow among the vertices of a regular $(n+1)$-gon, which are enumerated by $0,1,\dots,n$.
Choose any vertex $v$ containing at least 2 coins and \textit{rob} it as follows: 
take 1 coin from the vertex $v$ and put it either to the left or right neighbour
of $v$ with equal probability. Proceed as long as it is possible, i.e., until there remain no vertices containing at least 2 coins. 
Assume the process does not terminate for infinitely long. 
The number of vertices with
at least 1 coin does not decrease during our process. When it stabilizes, there is an empty vertex $v$, and its neighbour $u$ 
must be robbed finitely often, else with probability 1 $v$ becomes non-empty. Analogously consider the other (different from $v$)
neighbour of $u$
and verify that with probability 1 it must be robbed finitely often and so on. So, with probability 1 the process terminates in a finite
time. Next, the ultimate distribution of configurations does not depend on the choice of the robbed vertex at every turn. This follows from the
facts that operations commute, and any vertex with at least 2 coins must be robbed at some point before the process terminates.

There are $n+1$ possible final configurations (one empty vertex and $n$ vertices with 1 coin).

\begin{theorem}\label{sandpile} Assume that initially we have $c_i$ coins at vertex $i=0,1,\dots,n$. 
Then the probability $\prob(c_0,c_1,\dots,c_n)$ that vertex $n$ is empty in the final configuration 
equals $\Phi(\prod y_i^{c_i})/n!$.
\end{theorem}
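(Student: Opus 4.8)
The plan is to show that the function $P(c_0,\dots,c_n):=\prob(c_0,\dots,c_n)$ satisfies exactly the same system of linear relations that pins down $\Phi(\prod y_i^{c_i})/n!$ in the proof of Theorem \ref{Postnikov}, together with the same boundary condition, and that this system has a unique solution. Concretely, I will establish: (a) $P(1,1,\dots,1,0)=1$, matching $\Phi(y_0\cdots y_{n-1})/n!=1$ from \eqref{eq2}; (b) the one-step recursion for the sandpile: if $c_1\geq 2$ then $P(c_0,c_1,c_2,\dots)=\tfrac12 P(c_0+1,c_1-1,c_2,\dots)+\tfrac12 P(c_0,c_1-1,c_2+1,\dots)$, and cyclically for every coordinate that is $\geq 2$; and (c) that $P$ is cyclically symmetric in the appropriate sense (rotating the polygon rotates the labels, and the event "vertex $n$ empty" is not rotation-invariant, so I must be a little careful — see below). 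Relation (b) is immediate from the definition of the process: robbing vertex $1$ as the \emph{first} move (legal since the process outcome is independent of the order of robbing, as argued just before the theorem statement) sends one coin left or right with probability $\tfrac12$ each, and conditions the remaining evolution on the new configuration. Relation (a) needs a short argument that a configuration with all vertices occupied except vertex $n$, having one coin each elsewhere, is already terminal, so vertex $n$ is empty with probability $1$.

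The key point is that these are precisely the relations used in the maximum-principle argument of Theorem \ref{Postnikov}: writing $R(c_0,\dots,c_n):=P(c_0,\dots,c_n)-\Phi(\prod y_i^{c_i})/n!$, both terms vanish on cyclic type $(1,\dots,1,0)$ and both satisfy $2R(c_0,c_1,\dots)-R(c_0+1,c_1-1,\dots)-R(c_0,c_1-1,c_2+1,\dots)=0$ whenever $c_1\geq 2$ — for $\Phi$ this was shown there using $y_n\in I$ and $y_k(x_k-x_{k+1})\in I$, and for $P$ it is relation (b). Hence by the same maximum-principle argument $R\equiv 0$: if $R$ attained a positive maximum at $(c_0,\dots,c_n)$ with some $c_j\geq 2$, the two configurations obtained by the move would also be maximizers, and iterating the moves one reaches cyclic type $(1,\dots,1,0)$ where $R=0$, a contradiction; symmetrically for a negative minimum. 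The base case of "all $c_i\leq 1$" forces, since $\sum c_i=n$ and there are $n+1$ coordinates, exactly one zero coordinate, i.e. cyclic type $(1,\dots,1,0)$, so the induction grounds out correctly.

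The subtle step — and the one I expect to be the main obstacle — is matching the \emph{non-symmetric} boundary data. The quantity $\Phi(\prod y_i^{c_i})/n!$ depends on the full vector $(c_0,\dots,c_n)$, not merely its cyclic type, because $y_0,\dots,y_n$ are not cyclically interchangeable under $\Phi$ (only $y_n\in I$ is special). Likewise $P(c_0,\dots,c_n)$ — the probability that \emph{vertex $n$} specifically is the empty one — is not cyclically symmetric. So I cannot simply quotient by cyclic type as in Theorem \ref{Postnikov}; instead I must run the maximum-principle argument directly on functions of $(c_0,\dots,c_n)\in\mathbb{Z}_{\ge0}^{n+1}$ with $\sum c_i=n$, using the relation (b) in each coordinate $j$ with $c_j\geq 2$ — which for $\Phi$ reads $2\Phi(\prod y_i^{c_i})=\Phi(\prod y_i^{c_i}\cdot y_{j-1}/y_j)+\Phi(\prod y_i^{c_i}\cdot y_{j+1}/y_j)$ via $y_j(x_j-x_{j+1})=y_j(2y_j-y_{j-1}-y_{j+1})\in I$, valid cyclically because $y_n\in I$ lets us treat indices mod $n+1$. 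One must check that the move-graph on $\{c\in\mathbb{Z}_{\ge0}^{n+1}:\sum c_i=n\}$, with edges given by these legal moves, connects every vertex to a configuration of type $(1,\dots,1,0)$; this is the same easy reachability fact invoked in Theorem \ref{Postnikov}. Once reachability and the two sets of relations are in hand, uniqueness of the solution with the prescribed values on type $(1,\dots,1,0)$ follows, and since $P$ and $\Phi(\cdot)/n!$ share both, they coincide.
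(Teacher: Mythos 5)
Your overall strategy is the paper's: derive the one-step averaging relation from the first robbery (using that the final distribution is independent of the order of moves), match boundary data, and conclude by the same uniqueness/maximum-principle argument as in Theorem \ref{Postnikov}. But there is a genuine gap in the boundary data. The relations (b) together with the single value $P(1,\dots,1,0)=1$ do \emph{not} determine the function: the constant function $1$ satisfies (b) everywhere and takes the value $1$ at $(1,\dots,1,0)$, yet it is not $\prob$. The maximum-principle argument bottoms out at the terminal configurations, i.e.\ those with all $c_i\le 1$, and these are all $n+1$ cyclic rotations of $(1,\dots,1,0)$, not just the one you treat in (a). You assert in passing that ``both terms vanish on cyclic type $(1,\dots,1,0)$'' and later that the move-graph reaches ``a configuration of type $(1,\dots,1,0)$'', but you never establish the values of $P$ and of $\Phi(\prod y_i^{c_i})/n!$ at the $n$ terminal configurations in which vertex $n$ is occupied --- and note that from a starting configuration with $c_n\ge 1$ one can \emph{only} reach such terminal configurations, since a vertex that ever holds a coin never loses its last one, so reachability to the specific configuration $(1,\dots,1,0)$ fails.

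The missing ingredient is exactly the second of the paper's three defining properties: $\prob(c_0,\dots,c_n)=0$ whenever $c_n>0$ (a coin at vertex $n$ persists forever), and correspondingly $\Phi(\prod y_i^{c_i})=0$ whenever $c_n>0$ (the monomial is then divisible by $y_n\in I$). With this added, the boundary values at all terminal configurations are pinned down ($1$ at $(1,\dots,1,0)$ and $0$ at the other $n$), and your maximum-principle argument closes as intended. Everything else in your write-up --- the derivation of (b) from commutativity of the robbing moves, the matching relation for $\Phi$ via $y_j(x_j-x_{j+1})=y_j(2y_j-y_{j-1}-y_{j+1})\in I$, and the correct insistence on working with full configurations rather than cyclic types --- agrees with the paper.
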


\begin{proof} We have $\prob(1,\dots,1,0)=1$, $\prob(c_0,\dots,c_n)=0$ when $c_n>0$ and
\begin{multline*}
2\prob(c_0,c_1,\dots)=\prob(c_0,\dots,c_{i-1}+1,c_i-1,\dots)+\\+\prob(c_0,\dots,c_i-1,c_{i+1}+1,\dots)
\end{multline*}
whenever $c_i\geqslant 2$. 
The same three properties are shared by the function $\Phi(\prod y_i^{c_i})/n!$. But they define the function uniquely,
as the argument with maximum principle (very much the same as in the proof of Theorem \ref{Postnikov}) shows.
\end{proof}

\begin{remark}
By cyclic symmetry, $\prob(c_{i},c_{i+1},\dots,c_{i-1})$ is the probability that in the final configuration vertex $i-1$ is empty. 
Summing over all $i$, we get $\sum_{i=0}^n \Phi(\prod_{j=0}^n y_j^{c_{j+i}})=n!$, where summation indices are cyclic modulo $n+1$. This is another proof of Theorem \ref{Postnikov}. 
\end{remark}

We may consider a slightly more general problem. Namely, 
let ${\mathcal C}_{n+d}$ be a cycle with $n+d$ vertices
$1,\dots,n+d$ counted counter-clockwise, $x_i$ are the corresponding
variables. Indices are taken modulo $n+d$.
Put $n$ coins in the vertices, $c_i$ coins
in a vertex $i$, $\sum c_i=n$. Do the same 
robbing process as above until we get exactly $d$ empty
vertices, and 1 coin in each of the others. The sum of probabilities over all
$\binom{n+d}d$ possible final configurations equals 1, and this identity may be
rewritten as a divided symmetrization identity
due to Theorem \ref{sandpile}. 

Namely, let $P$ be a set of $d$ empty
vertices in the final configuration. Define the weight $w(P)$ as a product
of sizes of $d$ groups onto which the vertices from $P$ divide
the cycle (each vertex from $P$ belongs to exactly one group, so the
sum of sizes of these groups equals $n+d$).
For any vertex $i\notin P$ define
$z_i=z_i^P$ as a sum of variables between $i$ and the clockwise-next to $i$ vertex $p(i)$
from $P$ ($i$ included, $p(i)$ not included). Define also $z_i=1$
if $i\in P$. Then the probability that $P$ is empty in the final
configuration is
 \begin{equation}\label{nplusd}
\frac{w(P)}{(n+d)!}
DS_{\mathcal{C}_{n+d}} \prod_{p\in P} (x_{p+1}-x_{p}) \prod z_i^{c_i}.
\end{equation}
Indeed, this is clear if $c_i>0$ for some $i\in P$, both the probability that $P$ is empty
and the divided
symmetrization \eqref{nplusd} are equal to 0.
If $c_i=0$ for all  $i\in P$, the events in the $d$ groups between elements of $P$ are independent. The
symmetrizations are also independent due to the multiples
$x_{p+1}-x_p$, thus we apply \eqref{eq1} (strictly speaking, a generalization
of \eqref{eq1} for $d$ groups of independent variables.) 
The following identity generalizes Theorem \ref{Postnikov}
(which corresponds to the case $d=1$):

\begin{theorem} In above notations any polynomial $h$ of degree $n$ in $n+d$ variables
satisfies the following identity
\begin{equation}\label{cor65gen}
 \sum_P 
w(P)
DS_{\mathcal{C}_{n+d}} \prod_{p\in P} (x_{p+1}-x_{p}) h(z_1^P,\dots,z_{n+d}^P)=(n+d)!h(1,1,\dots,1)
\end{equation}
\end{theorem}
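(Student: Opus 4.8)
The plan is to deduce \eqref{cor65gen} from the probabilistic formula \eqref{nplusd} by linearity in $h$, exactly as the Remark after Theorem~\ref{sandpile} handles the case $d=1$. Both sides of \eqref{cor65gen} are linear in $h$, and each summand on the left is a genuine constant: $\prod_{p\in P}(x_{p+1}-x_p)$ has degree $d$ and $h(z_1^P,\dots,z_{n+d}^P)$ has degree $n$ (each $z_i^P$ is linear in the $x_j$), so the polynomial under $DS_{\mathcal{C}_{n+d}}$ has degree $n+d=|E(\mathcal{C}_{n+d})|$. Hence it suffices to prove \eqref{cor65gen} for a monomial $h=\prod_i t_i^{c_i}$ with $c_i\geq 0$ and $\sum_i c_i=n$ (as in Theorem~\ref{Postnikov} we take $h$ homogeneous of degree $n$; such monomials span that space). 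For such $h$ one has $h(z_1^P,\dots,z_{n+d}^P)=\prod_i (z_i^P)^{c_i}$ and $h(1,\dots,1)=1$, so the assertion becomes
\[
\sum_P w(P)\,DS_{\mathcal{C}_{n+d}}\!\left(\prod_{p\in P}(x_{p+1}-x_p)\prod_i (z_i^P)^{c_i}\right)=(n+d)!.
\]

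By \eqref{nplusd}, the $P$-th summand on the left, divided by $(n+d)!$, is the probability that $P$ is precisely the set of empty vertices when the robbing process on $\mathcal{C}_{n+d}$ started with $c_i$ coins at vertex $i$ terminates. Thus the displayed identity is equivalent to $\sum_P \prob(P\text{ empty at the end})=1$, which is what I would verify directly.

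To see $\sum_P\prob(P\text{ empty})=1$, I would reuse the termination analysis given just before Theorem~\ref{sandpile}, which applies verbatim to $\mathcal{C}_{n+d}$: the number of occupied vertices is nondecreasing and at most $n$; once it has stabilized there is an empty vertex whose neighbour must be robbed only finitely often with probability $1$ (otherwise that vertex becomes occupied), and propagating this around the cycle shows that with probability $1$ the process terminates; at termination every occupied vertex carries exactly one coin, so exactly $d$ vertices are empty, and commutativity of the robbing operations makes this final set of empty vertices independent of the choices made along the way. Hence, over the $\binom{n+d}{d}$ admissible sets $P$, the events $\{P\text{ empty}\}$ partition the sample space up to a null event, their probabilities add up to $1$, and \eqref{cor65gen} follows.

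The only real content is \eqref{nplusd} itself: the factorization of the cyclic divided symmetrization over the $d$ arcs cut out by $P$ (a multi-block version of \eqref{eq1}, made possible by the factors $x_{p+1}-x_p$) together with Theorem~\ref{sandpile} on each arc. That is the step sketched above; granting it, the proof of \eqref{cor65gen} is just linearity plus the almost-sure termination of the process, which is routine since it repeats the $d=1$ case. Alternatively one can avoid probability altogether and prove \eqref{cor65gen} by the maximum-principle scheme used for Theorem~\ref{Postnikov}: set $Q=(\text{left side}-\text{right side})$ for a monomial $h$, use the base case $c_i\in\{0,1\}$ (where the configuration is already final) and the recurrence coming from the relations $y_k(x_k-x_{k+1})\in I$ transplanted to each arc; but the probabilistic route is shorter.
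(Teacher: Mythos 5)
Your proposal is correct and follows essentially the same route as the paper: the author likewise reduces to monomials $h$ by linearity and then observes that, via \eqref{nplusd}, the left side of \eqref{cor65gen} divided by $(n+d)!$ is the total probability over all final configurations, which equals $1$ because the robbing process terminates almost surely. The extra detail you supply on termination for the cycle $\mathcal{C}_{n+d}$ is exactly the argument the paper invokes from the discussion preceding Theorem~\ref{sandpile}.
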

The proof is the same as in Theorem \ref{Postnikov}: it suffices to consider $h$
a monomial, in this case the result follows from \eqref{nplusd} and the fact that with probability
1 our process with coins should terminate. 

\smallskip

\textbf{Acknowledgements}
\smallskip

I am grateful to Tewodros Amdeberhan for bringing my attention to this topic, for
fruitful discussions, and for
reading and editing the preliminary version of the manuscript. 
I also thank the anonymous referee for suggesting a number
of improvements. 

The work is supported by St. Petersburg State University grant 6.37.208.2016.

\end{document}